\documentclass[]{amsart}
\usepackage[left=2.7cm,right=2.7cm,top=3.5cm,bottom=3cm]{geometry}
\usepackage[english]{babel}
\usepackage{pifont}
\usepackage[latin1]{inputenc}
\usepackage[T1]{fontenc}
\usepackage{amsmath}
\usepackage{amsfonts}
\usepackage{amssymb}
\usepackage{MnSymbol}
\usepackage{amsthm}
\usepackage{amscd}
\usepackage{upgreek}
\usepackage[all]{xy}
\usepackage{graphicx}
\usepackage[usenames,dvipsnames]{color}
\usepackage{mathrsfs}
\usepackage{caption}
\usepackage{tikz-cd} 
\usepackage{braket}
\usepackage{mathrsfs}
\usepackage[colorlinks = true,
linkcolor = black,
urlcolor = black,
bookmarksopen = true, 
backref = page]{hyperref}

\theoremstyle{plain}
\newtheorem{thm}{Theorem}[section]
\newtheorem{cor}[thm]{Corollary}
\newtheorem{lem}[thm]{Lemma}

\theoremstyle{definition}

\newtheorem{rmk}[thm]{Remark}
\newtheorem{rmks}[thm]{Remarks}

\renewcommand{\ge}{\geqslant}
\renewcommand{\le}{\leslant}

\newcommand{\field}[1]{\mathbb{#1}}
\newcommand{\Q}{\field{Q}}
\newcommand{\C}{\field{C}}

\newcommand{\R}{\field{R}}

\newcommand{\Z}{\field{Z}}
\newcommand{\A}{\field{A}}
\newcommand{\F}{\field{F}}

\newcommand{\G}{\field{G}}

\newcommand{\T}{\field{T}}

\DeclareMathOperator{\End}{End}

\DeclareMathOperator{\Gal}{Gal}

\DeclareMathOperator{\Spec}{Spec}

\DeclareMathOperator{\disc}{disc}

\DeclareMathOperator{\gl}{GL}
\DeclareMathOperator{\pgl}{PGL}

\newcommand{\calo}{\mathscr O}

\newcommand{\cals}{\mathscr S}
\newcommand{\scal}{\mathcal S}

\newcommand{\gotn}{\mathfrak n}

\renewcommand{\ge}{\geqslant}
\renewcommand{\le}{\leqslant}

\newcommand{\interior}[1]{%
	{\kern0pt#1}^{\mathrm{o}}%
}
\newtheorem{innercustomgeneric}{\customgenericname}
\providecommand{\customgenericname}{}
\newcommand{\newcustomtheorem}[2]{%
	\newenvironment{#1}[1]
	{%
		\renewcommand\customgenericname{#2}%
		\renewcommand\theinnercustomgeneric{##1}%
		\innercustomgeneric
	}
	{\endinnercustomgeneric}
}

\newcustomtheorem{customthm}{Theorem}

\numberwithin{equation}{section}

\title{A note on equidistribution on a product of  Shimura curves and  Andr\'e--Oort }
\author{Francesco Maria Saettone}
\address{Department of Mathematics, Weizmann Institute of Science, Israel}
\email{francesco.saettone@weizmann.ac.il}

\makeindex

\begin{document}
	
    \begin{abstract}
        In this short note we show that Galois orbits of CM points equidistribute on a product of $r\ge 2$ non-isomorphic Shimura curves by applying the adelic toral-packet equidistribution theorem of Aka--Luethi--Michel--Wieser. As a consequence, we deduce Andr\'e--Oort for the product of those curves, previously studied by Edixhoven and Yafaev, replacing GRH by a Linnik-type splitting condition at two auxiliary primes.
    \end{abstract}

\maketitle

\tableofcontents
    
\section{Introduction}

Nowadays, despite being commonly referred to as a {\em conjecture}, the Andr\'e--Oort conjecture is a theorem by the work of many (see \cite{tsi} for a recent survey), proved unconditionally via the Pila--Zannier strategy, which was first introduced for a new proof of Manin--Mumford in \cite{pz} and then exploited by Pila for the first unconditional proof of Andr\'e--Oort for an arbitrary product of modular curves in \cite{pila}. Nonetheless, other conditional results in the case of product of two modular curves were obtained earlier, under GRH, by Edixhoven in \cite{edix} and in \cite{edix2}. Under the same hypothesis, the two dimensional case was extended to Shimura curves by Yafaev in \cite{yaf}.
\medskip

In this modest note we consider the case of $r\ge 2$ {\em non}-isomorphic Shimura curves $\scal_1,\dots,\scal_r$, and, bye an equidistribution result for special points on their product 
\[
\scal_1\times\dots\times\scal_r
\]
we obtain as a corollary the corresponding case of the Andr\'e--Oort conjecture dropping GRH and assuming instead a mild congruence condition on two auxiliary primes (also known as a Linnik-type condition). The equidistribution input is the theorem of Aka--Luethi--Michel--Wieser \cite[Theorem~1.8]{almw}, which in turn relies on Einsiedler--Lindenstrauss' joining theorem \cite{el} together with some character-spectrum analysis. 

Ergodically, the situation we consider is especially simple: since the curves $\scal_1,\dots,\scal_r$ are pairwise non-isomorphic, the product $\scal_1\times\cdots\times \scal_r$ contains no graphs of Hecke correspondences. Consequently, the only positive-dimensional special subvarieties are fibers of CM points, and all joinings are trivial. 
A similar consideration on disjointness plays a crucial role in \cite{almw}, where the authors prove a simultaneous equidistribution on the supersingular locus of modular curves. Other simultaneous equidistributions were proved unconditionally in \cite{cv} and \cite{fms} using Ratner's theorem\footnote{At the price of fixing the discriminant and letting  the conductor varying $p$-adically.}. 

\medskip

Whenever joinings other than the trivial one appear, the situation becomes radically subtler. The majestic work of Khayutin \cite{kha} on the mixing conjecture exploited a striking combination of ergodic and analytic number theory especially to deal with the non-trivial joinings coming from  Hecke correspondences, and it is conditional to assuming no exceptional Landau--Siegel zero, plus a Linnik-type condition. The latter was removed, by assuming GRH, in the recent work of Blomer--Brumley--Khayutin \cite{bbk} for $r=2$. 
\\

For $r\ge2$, let $G_1,\dots,G_r$ be $r$ non-isomorphic forms of $\pgl_2$  and let $\scal_1,\dots,\scal_r$ be the  Shimura curves relative to $G_1,\dots,G_r$.

\begin{thm}\label{t:1}
 Consider a sequence of CM points $(z_n)_n$ in $\scal_1\times\dots\times\scal_r$ such that for all $n$, the coordinates of $z_n$ have the same CM field $E_n$. Fix two odd distinct primes $p,q$ and assume that, for all natural numbers $n$, the primes $p,q$ split in $E_n$.
 If the sequence $(z_n)_n$ is strict\footnote{I.e., its intersection with any proper special subvariety is finite.}, then its Galois orbits become equidistributed in $\scal_1\times\dots\times\scal_r$ as the discriminants of $E_n$ tend to infinity.
\end{thm}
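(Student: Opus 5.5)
The plan is to translate the Galois orbit of $z_n$ into an adelic toral packet in the product group $\mathbf G=G_1\times\dots\times G_r$, and then apply the theorem of Aka--Luethi--Michel--Wieser \cite[Theorem~1.8]{almw}.

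\textbf{Adelic description.} Each $G_i$ is $B_i^\times/\G_m$ for a quaternion algebra $B_i$ over $\Q$, and $\scal_i(\C)=G_i(\Q)\backslash (\mathfrak h^{\pm}\times G_i(\A_f))/K_i$. A CM point of $\scal_i$ with CM field $E_n$ comes from an optimal embedding $\iota_i\colon E_n\hookrightarrow B_i$. This embedding gives a torus $\mathbf T_i=\iota_i(\mathrm{Res}_{E_n/\Q}\G_m)/\G_m\subset G_i$ and a point $g_i\in G_i(\A)$. Since all coordinates share the same CM field $E_n$, the tori $\mathbf T_i$ are all isomorphic to $\mathbf T_n=\mathrm{Res}_{E_n/\Q}\G_m/\G_m$. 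We embed $\mathbf T_n$ diagonally into $\mathbf G$ via $(\iota_1,\dots,\iota_r)$.

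By Shimura reciprocity, the $\Gal(\overline\Q/E_n)$-orbit of $z_n$ is the image of the packet $\mathbf G(\Q)\mathbf T_n(\A)g$ with $g=(g_1,\dots,g_r)$. Concretely, it is the orbit of $\mathbf T_n(\A_f)$ acting through the class group $\mathrm{Pic}(\calo_{E_n})$, up to bounded index coming from the orders and the levels $K_i$. The full $\Gal(\overline\Q/\Q)$-orbit is the union of at most two such orbits, one for each orientation. So it suffices to show that the pushforward of the Haar measure on this diagonal packet to $\mathbf G(\Q)\backslash\mathbf G(\A)/K$ equidistributes. We use the standard projection from $\mathbf G(\A)$ to the Shimura variety after quotienting by $K_\infty$ and $K_f$.

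\textbf{Applying \cite{almw}.} The hypotheses of their Theorem~1.8 should be checked in this order.

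First, the discriminants of the packets tend to infinity; this is our hypothesis $\disc(E_n)\to\infty$.

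Second, the Linnik condition. The primes $p,q$ split in $E_n$, so $\mathbf T_n$ is split at $p$ and at $q$. After replacing $p,q$ if needed, we may assume they do not ramify in any $B_i$; this is harmless because the primes dividing some $\disc B_i$ form a fixed finite set, and splitting at two fixed primes is what the theorem requires. Then each $G_i$ is split at $p$ and $q$, and the diagonal torus acts with a split rank-one component at each of these places. This is exactly the input that lets Einsiedler--Lindenstrauss' joining theorem run. The two primes are needed because the product has rank $r$ but the torus has rank one, so one works at two places.

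Third, and the main obstacle, the non-isomorphism hypothesis. Any limit measure is a joining of the $r$ equidistributed marginals, which come from Duke's theorem or the one-factor case. The joining theorem says such a limit is invariant under an intermediate algebraic subgroup. It remains to show that this subgroup contains $\prod_i G_i^{+}$, the image of the simply connected covers. A proper subgroup surjecting onto each factor would be the graph of an isomorphism $G_i\cong G_j$ over $\Q$, up to inner twist. Such isomorphisms are excluded because the $B_i$ are non-isomorphic quaternion algebras.

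Strictness is used here too. If the packets concentrated on a proper special subvariety, namely a fiber over a fixed CM point, the discriminant would stay bounded. The character-spectrum part of \cite{almw} handles the residual $\mathbf G(\A)/\mathbf G(\A)^{+}$ component: it shows that the packets equidistribute among the connected components, because the orders are varying and the image of $\mathbf T_n(\A)$ under the determinant is of full size asymptotically. One checks this through the reduced norm and genus theory for $E_n$, handling the finitely many genus characters.

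Combining these steps, every weak limit is the Haar probability measure on $\scal_1\times\dots\times\scal_r$. This gives the claimed equidistribution of the Galois orbits.
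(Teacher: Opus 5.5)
Your overall route is the paper's. Shimura reciprocity identifies $\Gal_{E_n}.z_n$ with the image under $\pi_\infty$ of the finite-level packet $[\T_{\iota_n}g_n]_{K_f}$; you then apply \cite[Theorem~1.8]{almw} and push forward. Your sketch of the joining argument and of the non-isogeny of the $G_i$ is the internal content of that theorem, so it is not needed once the theorem is cited. What is needed is a correct check of its hypotheses, and that is where the gap is.

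\textbf{The component step is wrong.} The image of $\T_{\iota_n}(\A)$ under the reduced norms is never ``of full size''. By class field theory, $\Q^\times N_{E_n/\Q}(\A_{E_n}^\times)$ has index $2$ in $\A^\times$ for every $n$. Moreover, the packet projects to a translate of the \emph{diagonal} image of this group in $\prod_i\Q^\times\backslash\A^\times/(\A^\times)^2$. Consequently, for $r\ge2$ the packets do \emph{not} equidistribute among the components of $[\G]$. Take a nontrivial quadratic idele class character $\chi$. The character $(\chi,\chi,1,\dots,1)$ of the component group is constant on the packet, equal to $\chi(\mathrm{Nr}\,g_{1,n})\chi(\mathrm{Nr}\,g_{2,n})$, which has modulus $1$ for every $n$.

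This obstruction disappears only at finite level, through the hypothesis of \cite[Theorem~1.8]{almw} that $\mathrm{Nr}_{B_i}(K_{f,i})$ surjects onto $\widehat\Z^\times/(\widehat\Z^\times)^2$. Since $\Q^\times\backslash\A^\times/\widehat\Z^\times(\A^\times)^2$ is trivial, this hypothesis kills every such character. You never state or check it. The paper does: the level is maximal, so $\mathrm{Nr}(\widehat{R_i}^{\,\times})=\widehat\Z^\times$ by \cite[Remark~1.10, Lemma~9.5]{almw}. This verification is what must replace your genus-theory heuristic.

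\textbf{The auxiliary primes.} You cannot ``replace $p,q$ if needed'', because splitting is assumed only at the two given primes. No replacement is necessary, however. If $p$ splits in $E_n$ and $E_n\hookrightarrow B_i$, then $B_i\otimes\Q_p$ contains $\Q_p\times\Q_p$ and so has zero divisors. Hence $p,q\nmid\disc B_i$ automatically.

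Also, two primes are needed because the Einsiedler--Lindenstrauss joining theorem requires a higher-rank action: $\T(\Q_p)\times\T(\Q_q)$ has rank $2$. It is not because ``the product has rank $r$''.
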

The above theorem yields, under the splitting condition on the $E_n$'s, a special case of a general (and, to the best of the author's knowledge, still broadly open) folklore conjecture first written down by Shou-Wu Zhang in \cite[Conjecture 2.2]{swz}. It is not hard to see that the equidistribution of Galois orbits of CM points is a refinement, and actually implies, the Andr\'e--Oort conjecture (see, for instance, \cite[Remarks.(1), p.3660]{swz}).

For $r=2$, it seems plausible that the techniques of \cite[Theorem~2.2]{bbk} could be adapted to remove the Linnik-type condition and to make the equidistribution effective, since the absence of Hecke graphs should allow one to drop several of their hypotheses.

\medskip

In any case, Theorem~\ref{t:1} implies the following (conditional) case of the Andr\'e--Oort conjecture.
\begin{cor}\label{ao}
Fix two odd distinct primes $p$ and $q$. Let $Z \subset \scal_1\times\dots\times \scal_r$ be a closed irreducible subvariety containing a Zariski dense set of CM points such that $p$ and $q$ split in all their CM fields $E_n$. Then $Z$ is of the form 
\[
\prod_{i\in I}\scal_i \times \prod_{j\notin I}\{x_j\}
\]
where $I\subset\{1,\dots,r\}$ and $x_j\in\scal_j$ are CM points.
\end{cor}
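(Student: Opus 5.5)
We derive Corollary~\ref{ao} from Theorem~\ref{t:1} by induction on $r$, using equidistribution of Galois orbits to force $Z$ to be the whole ambient product whenever the CM points on it can be taken strict. Several reductions come first.

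First, we make the CM fields agree. If a CM point $(x_1,\dots,x_r)$ has coordinates with different CM fields, we do not discard it; instead we treat separately the coordinates sharing a common field. The cleanest route is to project $Z$ to each factor. Let $\pi_i\colon Z\to\scal_i$ be the projection. If $\pi_i(Z)$ is a point, that point is a limit of CM points in a finite Zariski-closed set, hence a CM point $x_i$. We drop that factor and replace $Z$ by its image in $\prod_{j\ne i}\scal_j$, which is a smaller product of the same type. After these reductions we may assume every $\pi_i$ is dominant, hence surjective, since $Z$ is closed, the curves are projective (or we work with compactifications), and $Z$ is irreducible. It then remains to show that $Z=\scal_1\times\dots\times\scal_r$.

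Second, we extract a strict sequence. Enumerate the countably many proper special subvarieties of $\prod\scal_i$. By the remark in the introduction, these are exactly the products of some full factors with CM points in the remaining factors, because there are no Hecke graphs between non-isomorphic $\scal_i$. If $Z$ is not itself special, then $Z\cap S$ is a proper closed subset of $Z$ for each proper special $S$. A diagonal argument then produces a sequence $z_n$ of CM points of $Z$, Zariski dense in $Z$, meeting each proper special subvariety in only finitely many points. To do this, at step $n$ choose $z_n$ in the dense CM set avoiding $S_1\cup\dots\cup S_n$ and also avoiding a prescribed proper closed subset, so as to guarantee density.

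The point requiring most care is the hypothesis in Theorem~\ref{t:1} that all coordinates of $z_n$ share one CM field $E_n$. I would argue as follows. Since the $z_n$ are strict, each coordinate is eventually non-constant in the sense that the discriminants of its CM field go to infinity. Suppose instead that the CM fields of the coordinates differ. By the theory of CM points on Shimura curves and Shimura reciprocity, the Galois orbit of $z_n$ is then, up to bounded index, a product of Galois orbits over the compositum. These are of size roughly $\prod_i |\disc E_{n,i}|^{1/2\pm\epsilon}$ by Siegel's bound. A Galois-orbit counting argument of Edixhoven--Yafaev type then shows that such points cannot lie densely on a proper subvariety.

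If this step is too delicate, I would instead interpret the hypothesis of the Corollary ("their CM fields $E_n$") as already requiring a common CM field per point, which seems to be the intended reading. With that reading, Theorem~\ref{t:1} applies directly to $(z_n)$.

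Conclusion. Since $Z$ is defined over a number field $K$, it is stable under $\Gal(\bar{\Q}/K)$, so the Galois orbits of $z_n$ over $K$ lie in $Z(\C)$. These orbits have bounded index in the orbits over $\Q$ and hence still equidistribute. Theorem~\ref{t:1} therefore says that the uniform measures on these orbits, each supported on $Z(\C)$, converge weakly to the product Haar (hyperbolic) measure, which has full support on $\prod\scal_i$. Because $Z(\C)$ is closed, the limit measure is supported on $Z(\C)$. Hence $Z=\prod\scal_i$, contradicting the assumption that $Z$ is not special. So $Z$ is special, and by the classification of special subvarieties it has the stated form $\prod_{i\in I}\scal_i\times\prod_{j\notin I}\{x_j\}$.
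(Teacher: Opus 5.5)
Your architecture is the one the paper relies on. The paper deduces the Corollary from Theorem~\ref{t:1} by the standard ``equidistribution implies Andr\'e--Oort'' argument (it only cites Zhang for this), together with the Lemma classifying special subvarieties. Your reduction to surjective projections, your extraction of a strict sequence by avoiding an enumeration $S_1,S_2,\dots$ of the proper special subvarieties, and your use of the Galois stability of $Z$ supply exactly the omitted details. However, three steps are wrong as written.

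\emph{First, the common CM field.} Your primary route fails. For $r=3$, the proper subvariety $\scal_1\times\scal_2\times\{x_3\}$ contains a Zariski-dense set of CM points whose first two coordinates have different CM fields. If you replace ``proper'' by ``non-special'', the claim becomes a statement of Andr\'e--Oort strength. The Edixhoven--Yafaev fiber count works when $Z$ is a curve; for $r=2$ this is \cite[Proposition~3.1]{yaf}, which is why the paper relaxes the hypothesis only in that case. For general $r$ the paper can only point to \cite[Theorem~8.1]{edix2}, which needs GRH. Drop that route: a single CM field $E_n$ per point is the intended hypothesis, inherited from Theorem~\ref{t:1}.

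\emph{Second, the discriminants.} Strictness does \emph{not} force $|\disc(E_n)|\to\infty$, since a strict sequence can have a fixed CM field with conductors tending to infinity. Each $\scal_j$ has only finitely many CM points with a given order discriminant, and $z_n$ must eventually leave each $\scal_1\times\cdots\times\{x_j\}\times\cdots\times\scal_r$. So what strictness actually gives is $|\disc(E_n)|\,c_{j,n}^2\to\infty$ for every $j$. Theorem~\ref{t:1} is stated for $|\disc(E_n)|\to\infty$, so apply Theorem~\ref{t:almw-specialized} directly, as in the proof of Theorem~\ref{t:1}. The packet discriminant is the minimum over $j$ of quantities comparable to $|\disc(E_n)|\,c_{j,n}^2$, hence tends to infinity.

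\emph{Third, the sub-orbits.} A sub-orbit of bounded index in an equidistributed orbit need not equidistribute, so ``hence still equidistribute'' is unjustified. You need less than that. Put $d=[K:\Q]$, let $\mu_n$ be the normalized counting measure on $\Gal(\overline{\Q}/E_n)\cdot z_n$, and let $\nu_n$ be the one on $\Gal(\overline{\Q}/KE_n)\cdot z_n$. Then $\nu_n\le d\,\mu_n$. Any weak-$*$ limit $\nu$ therefore satisfies $\nu\le d\,\mu_{\mathrm{Haar}}$, and since $Z(\C)$ is closed, $\nu(Z(\C))=1$. Hence $\mu_{\mathrm{Haar}}(Z(\C))\ge 1/d>0$, which is impossible for a proper subvariety. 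Alternatively, note that the full $\Gal(\overline{\Q}/E_n)$-orbit lies in the union of the at most $d$ Galois conjugates of $Z$, a closed set of Haar measure zero if $Z$ is proper.

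With these three repairs the argument goes through.
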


\begin{rmks}
 \begin{itemize}
    \item[]
\end{itemize}
\noindent{\bf 1.}
In the case  of a product of two such Shimura curves $\scal_1\times\scal_2$, which falls within the cases studied in \cite{yaf}, any positive-dimensional (special) subvariety $Z$ is a curve containing infinitely many CM points $z_n=(x_n,y_n)$.  By \cite[Proposition~3.1]{yaf}, for all but finitely many $n$, the CM fields of $x_n$ and $y_n$ are isomorphic. Consequently, the hypotheses of Theorem~\ref{t:1} and Corollary~\ref{ao} may be weakened in this case.
\\
\noindent{\bf 2.} 
Let $Z$ be a closed irreducible subvariety of $\C^n$ containing a Zariski-dense set $\Sigma$ of special points. Rather interestingly, by \cite[Theorem 8.1]{edix2}, either by assuming GRH for imaginary quadratic fields or by assuming that the set of CM points can be taken in a single isogeny class (naturally, this last assumption does not apply in our scenario, as there are no isogenies at all), there is a special curve $C\subset Z$ such that for all but finitely many $x\in\Sigma$, one has $x\in C$. The proof of \cite[Theorem 8.1]{edix2} seems to be straightforwardly adaptable to a product of Shimura curves. Therefore, by \cite[Proposition 3.1]{yaf}, one would obtain that the coordinate of almost all special points $x$ have the same CM field.

\end{rmks}
\medskip
\noindent

One final comment on the literature of equidistribution on product of Shimura curves and the various hypothesis there exploited. From a broader perspective, several complementary approaches yield 
equidistribution of Galois orbits of CM points on products of Shimura curves under 
different sets of significant hypotheses: 
\begin{itemize}
\item Khayutin's work \cite{kha} treats self-products of Shimura curves under the assumption of no Siegel zeros and the Linnik-type condition; 

\item the adelic equidistribution theorem of Aka--Luethi--Michel--Wieser \cite[Theorem~1.8]{almw}, based on Einsiedler--Lindenstrauss's joining theorem \cite{el}, applies to the finite-level packets used here;

\item Blomer--Brumley--Khayutin's work \cite{bbk} handles products of two Shimura curves curves under GRH; 

\item Blomer--Brumley--Radziwi\l{}\l{} \cite{bbr} prove a joint Linnik
equidistribution theorem for distinct quaternionic varieties at almost maximal level. Their result applies when two quaternion algebras are non-isomorphic and replaces GRH by a zero-free region assumption for the quadratic Dirichlet $L$-function.
\end{itemize}

This note addresses one specific case within the broader landscape described above and may be viewed as an equidistribution analogue, under weaker hypotheses, of a portion of Yafaev's result in \cite{yaf}. The main input is the finite-level toral-packet equidistribution theorem of \cite{almw}, whose proof is ultimately based on the joining theorem of Einsiedler--Lindenstrauss \cite{el} and does not require any form of GRH.

\subsubsection*{Acknowledgments}
I thank Farrell Brumley for discussions related to this note, Harry Schmidt for encouragement  to write it down, and George Papas and the anonymous referee for helpful comments. I was supported by ISF grant 2067/23 and by the ERC, SharpOS, 101087910.

\section{Shimura curves and Equidistribution}

\subsection{Quaternion algebras and special points}

\subsubsection{Indefinite quaternion algebras}

Consider an {\em indefinite} quaternion algebra $B$ over $\Q$, which means that $B\otimes \R\simeq M_2(\R)$. Denote by $d$ its discriminant, i.e., the product of the finitely many primes $p$ such that $B\otimes\Q_p$ is not isomorphic to $M_2(\Q_p)$.

Let $G$ be the reductive group over $\Q$ whose functor of points sends a commutative $\Q$-algebra $A$ to
\[
G(A)=(B\otimes_\Q A)^\times/A^\times.
\]
Consider the real embedding $h_0\colon \text{Res}_{\C/\R}\G_m\rightarrow G_\R$. 
We then fix an isomorphism $B\otimes\R\simeq M_2(\R)$. This isomorphism induces a map $B^\times\rightarrow \gl_2(\R)$ which gives an action of $B^\times$ on the conjugacy class of $h_0$, which is isomorphic to $\mathscr{H}=\C-\R$, by M\"obius transformations. 

Henceforth, let $\A$ and $\A_f$ denote the $\Q$-adeles and the finite $\Q$-adeles respectively.
For any open subgroup $U$ of $G(\A_f)$ which is compact modulo $\widehat{\Z}^\times$, we consider
\begin{equation}\label{anShim}
\scal_U^{\text{an}}:=G(\Q)\backslash \mathscr{H}\times G(\A_f)/U\cup \{\text{cusps}\}
\end{equation}
whose canonical model is the Shimura curve $\scal_U$ over $\Q$. It is a proper and smooth curve over its reflex field $F$. Note that the set $\{\text{cusps}\}$ is non-empty if and only if $B=M_2(\Q)$; equivalently, $\scal_U^{\text{an}}$ is compact if and only if $B$ is a division algebra.

We briefly recall how to define in generality a Shimura variety via a pair consisting of a reductive group and of a symmetric domain. A \emph{Shimura datum} is a pair $(G, X)$ where $G$ is a reductive algebraic group over $\mathbb{Q}$, and $X$ is a $G(\R)$-conjugacy class of homomorphisms $h\colon \text{Res}_{\C/\R}\G_m \to G_{\mathbb{R}}$ satisfying  Deligne's axioms (see \cite{del}).

Given a compact open subgroup $K \subset G(\mathbb{A}_f)$, the associated \emph{Shimura variety} is given by
\[
\mathrm{Sh}_K(G,X) = G(\mathbb{Q}) \backslash (X \times G(\mathbb{A}_f)/K).
\]

Let $(H, X_H) \subset (G, X)$ be a Shimura subdatum. Then for $K_H = K \cap H(\mathbb{A}_f)$, the image of the morphism $\mathrm{Sh}_{K_H}(H, X_H) \to \mathrm{Sh}_K(G, X)$
is called a \emph{special} subvariety.

\subsubsection{Special points}
Let $E$ be a quadratic imaginary extension of $\Q$ and fix an embedding $\rho\colon E\hookrightarrow B$. The scheme of $\mathit{CM}$ $\mathit{points}$ by $E$ consists of the points $z$ of $\scal_U$ that can be represented by $(z_0,g)\in \mathscr{H}\times G(\A_f)$ via (\ref{anShim}), where $z_0$ is the unique point fixed by $E^\times$. By the work of Shimura, it is a finite subscheme of $\scal_U$ defined over $E^{\text{ab}}$. By taking the union of $\scal_U^{\tau(E^\times)}$ over all pairs $(E,\tau)$, we obtain the CM ind-subscheme $\scal_U^{\text{CM}}$. The absolute Galois group of $E$, which we denote by $G_E$, acts on $\scal_U^{\text{CM}}$ via
\[
\sigma.(x_0,g)=(x_0,\text{rec}_E(\sigma)g)
\]
where $\text{rec}_E$ is Artin's reciprocity map. If we consider CM points of conductor $c$, this action factors through $\text{Gal}(H[c],E)$, where $H[c]$ is the ring class field of $E$ of conductor $c$.
\\

Consider  an order\footnote{See as in \cite[Sec.1.5.1]{swz2} for the detailed construction.} $R$ of $B$ of discriminant $\gotn$ which contains $\rho(\calo_E)$ and the corresponding  Shimura curve of level $\widehat{R}^\times$. Then for $z$ a CM point by $E$ we consider 
\begin{equation}\label{e:order}
\End(z):=g\widehat{R}^\times g^{-1}\cap \rho(E)
\end{equation}
which is an order in $E=\rho(E)$ independent of the choice of $g\in G(\A_f)$. The $\mathit{conductor}$ of $z$ is defined as the unique integer $c$ such that
\[
\End(z)=\Z + c\calo_E.
\]
The discriminant of $\End(z)$ is of the form $Dc^2$, where $D$ is the discriminant of $E$  and $c$ the conductor.

Let us recall the following characterization of  CM points in terms of an adelic double quotient. Let $T$ be the $\Q$-rational torus in $G$. The set of CM points in $\scal_U$ is then in bijection with
\[
T(\Q)\backslash G(\A_f)/U.
\]

\subsubsection{Moduli interpretation}
From now on we will assume that all the Shimura curves we consider have maximal level structure, since this is sufficient for the Andr\'e--Oort application considered here. This assumption also verifies automatically the norm-surjectivity hypothesis in the  equidistribution theorem \cite[Theorem~1.8]{almw}; see Theorem~\ref{t:almw-specialized} below. Similarly, the assumption that the Shimura curves are defined over $\Q$ rather than over a totally real number field is irrelevant both to the diophantine and to the ergodic setting. 

We now briefly recall the moduli interpretation of Shimura curves over $\Q$. We invite the reader to refer to \cite{buz} and \cite{swz2}.

Let $b\mapsto \overline{b}$ denote the canonical involution of $B$, and let $b^*=t^{-1}\overline{b}t$, where $t$ is such that $t^2=d$, where $d$ is the discriminant of $B$. Let also $\calo_B$ denote the maximal order of $B$. The curve $\scal$ is the coarse moduli scheme for the following moduli problem on $\Q$-schemes:
\[
S\mapsto [A,i,\theta]
\]
where the isomorphism classes consist of:
\begin{itemize}
\item   an abelian scheme $A$ over $S$ of relative dimension $2$; 
\item an injective ring morphism $i\colon\calo_B\rightarrow \End_S(A)$;
\item a principal polarization $\theta$ such that for every geometric point $s$, the Rosati involution on $\End(A_s)$ induces the involution $*$.
\end{itemize}
As the principal polarization $\theta$ exists, and is unique, we will safely ignore it. The couple $(A,i)$ is usually called a {\em QM abelian surface} or {\em false elliptic curve}.
By extending the moduli interpretation to the integers, one obtains that the Shimura curve has a proper\footnote{In the case of a modular curve, finitely many cusps are needed for properness.} regular integral model $\cals$ over $\Z$.

\subsubsection{Special subvarieties}

Let $\scal_1,\dots,\scal_r$ be quaternionic Shimura curves associated with quaternion algebras $B_1,\dots,B_r$ over $\mathbb{Q}$ and with full level structure. Henceforth, we shall make the following assumption:
\[
B_i \not\simeq B_j
\] 
for all $i\neq j$.

For $1\le i\le r$, let $G_i$ be the reductive group defined by the functor of points $G_i(A)=(B_i\otimes_{\mathbb{Q}}A)^\times/A^\times$, where $A$ is a commutative $\mathbb{Q}$-algebra. Then the fiber product  
$$
 \scal_1\times\dots\times \scal_r
$$ 
is obviously itself a Shimura variety over $\Q$ associated with the datum $\big(\prod_{i=1}^r G_i, (\C-\R)^r\big)$.

In what follows we spell out the proof of the following Lemma, probably well-known to some experts, which characterizes the special subvarieties of our product of non-isomorphic Shimura curves.

\begin{lem}
Let $I\subset \{1,\dots,r\}$. The positive-dimensional special subvarieties of $ \scal_1\times\dots\times \scal_r$ are those of the form
\[
\prod_{i\in I}\scal_i
\times \prod_{j\notin I} \{x_j\},
\]
where each $x_j \in \scal_j$ is a CM point.  
\end{lem}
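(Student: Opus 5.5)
The plan is to classify Shimura subdata $(H,X_H)\subset(G,X)$, where $G=\prod_{i=1}^r G_i$ and $X=(\C-\R)^r$, and read off their images. Let $\pi_i\colon G\to G_i$ be the projections. For a subdatum, $H$ is a reductive $\Q$-subgroup and every $h\in X_H$ factors through $H_\R$; its composition with $\pi_i$ is the standard point of $G_{i,\R}\simeq \pgl_{2,\R}$ conjugate to $h_0$. In particular $\pi_i(H)$ is a reductive $\Q$-subgroup of $G_i$ containing a conjugate of the image of $h_0$, so its real points contain a compact torus $\mathrm{SO}_2(\R)/\pm1$. Since $G_i$ is an inner form of $\pgl_2$, the reductive $\Q$-subgroups of $G_i$ whose real points contain such a torus are either all of $G_i$ or a maximal $\Q$-torus $T_i$ with $T_i(\R)$ compact. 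Thus $T_i$ is the image of $\rho(E_i)^\times/\Q^\times$ for an imaginary quadratic $E_i\hookrightarrow B_i$. Let $I=\{i:\pi_i(H)=G_i\}$.

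Next I will show that for $i\neq j$ in $I$, the projection $\pi_{ij}(H)\subset G_i\times G_j$ is the full product. Let $H'$ be the derived group of the identity component of $\pi_{ij}(H)$. It surjects onto $G_i$ and $G_j$, which are simple adjoint groups of type $A_1$ over $\Q$. By Goursat's lemma, $H'$ is either $G_i\times G_j$ or the graph of an isomorphism $\phi\colon G_i\to G_j$ over $\Q$. The main point is to exclude the graph. Any $\Q$-isomorphism $G_i\simeq G_j$ of inner forms of $\pgl_2$ comes from an isomorphism of the quaternion algebras $B_i\simeq B_j$. This holds because $\mathrm{Aut}(\pgl_2)=\pgl_2$, so forms of $\pgl_2$ are classified by $H^1(\Q,\pgl_2)$, which also classifies quaternion algebras; concretely, $B_i$ is recovered as the algebra generated by the adjoint representation on $\mathrm{Lie}(G_i)$. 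This contradicts $B_i\not\simeq B_j$. A Goursat-type argument for simple factors then gives $H\supseteq\prod_{i\in I}G_i$ up to the torus part. Indeed, the derived group of $H^\circ$ projects onto each $G_i$, $i\in I$, and no two of these factors are identified. Consequently $H=\prod_{i\in I}G_i\times H''$, with $H''$ contained in $\prod_{j\notin I}T_j$, a torus.

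It follows that, for $j\notin I$, the $j$-component of $X_H$ is a single point $z_j$: a torus $T_j$ with compact real points fixes a unique point of $\C-\R$. The image of $\mathrm{Sh}_{K_H}(H,X_H)$ is therefore a union of translates of $\prod_{i\in I}\scal_i\times\prod_{j\notin I}\{[z_j,g_j]\}$. Since special subvarieties are taken to be irreducible components (images of connected components), each is of the stated form, and each $x_j=[z_j,g_j]$ is CM by definition. Positive-dimensionality forces $I\neq\emptyset$. For the converse, any such product is the image of the subdatum $\big(\prod_{i\in I}G_i\times\prod_{j\notin I}T_j,\ (\C-\R)^I\times\prod_{j\notin I}\{z_j\}\big)$ translated by $g$. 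This is a valid subdatum, since tori with a CM point satisfy Deligne's axioms in the sub-setting.

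I expect the main obstacle to be the Goursat step: the cleaner route via Lie algebras, or via the rigidity that $\Q$-isomorphisms of $G_i$, $G_j$ are induced by algebra isomorphisms. Care is also needed with the $\Q$-rationality of $\phi$, which is automatic because $H'$ is defined over $\Q$.
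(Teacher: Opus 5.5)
Your argument is correct and is essentially the paper's: you classify the subdata $(H,X_H)$ and use $B_i\not\simeq B_j$ (equivalently $G_i\not\simeq G_j$ over $\Q$) to rule out any simple factor of $H$ mapping onto two coordinates, which leaves $H=\prod_{i\in I}G_i\times(\text{torus})$. You also make explicit several points the paper leaves terse or omits: the Goursat step, why a $\Q$-isomorphism of inner forms of $\pgl_2$ gives an isomorphism of quaternion algebras, the passage to irreducible components, and the converse direction.
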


\begin{proof}
A special subvariety of $X$ corresponds to a Shimura subdatum
\[
(H,X_H)\subset
(G_1\times\dots \times G_r,\;
(\C-\R)^r).
\]
By \cite[Theorem 4.3]{moo}, every special subvariety contains a special point,
hence the  center of $H$ is a torus and $H$ is generated by the centers of its simple factors.

Since the quaternion algebras $B_i$ are pairwise non-isomorphic, the adjoint 
groups 
are pairwise non $\Q$-isogenous.  Therefore, there is no nontrivial algebraic subgroup of
$B_i^\times \times B_j^\times$ surjecting onto both factors for $i\neq j$.
Consequently, $H$ cannot contain any mixed simple factor linking two different
indices. It follows that every possible $H$ is a product of the form
\[
H \;=\;
\prod_{i\in I} B_i^\times 
\;\times\;
\prod_{j\notin I} T_j,
\]
where $I\subset\{1,\dots,r\}$ and each $T_j\subset B_j^\times$ is a torus. The corresponding special subvariety is then
\[
\prod_{i\in I} \scal_i
\;\times\;
\prod_{j\notin I} \{x_j\}
,
\]
with $x_j$ the CM point determined by $T_j$.  
\end{proof}



The next Lemma immediately implies the trivial cases of Corollary \ref{ao} for $r=2$. 

\begin{lem}Let $X$ and $X'$ be smooth projective geometrically irreducible curves over a
field $F$, and let
$p_1 : X \times X' \longrightarrow X$, and $p_2 : X \times X' \longrightarrow X'$
be the projections on the first and second factor respectively.  
Let $C \subset X \times X'$ be an irreducible curve
such that $p_1(C)=\{x_0\}$ for some closed point $x_0 \in X(F)$, and assume that
$p_2|_C$ is non constant. Then $C=\{x_0\}\times X'$.
\end{lem}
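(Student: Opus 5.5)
The plan is to trap $C$ inside the fiber $p_1^{-1}(x_0)$ and then compare dimensions. First, since $p_1(C)=\{x_0\}$, we have $C\subseteq p_1^{-1}(x_0)$ set-theoretically. As $x_0\in X(F)$ is an $F$-rational point, the scheme-theoretic fiber is
\[
p_1^{-1}(x_0)=\{x_0\}\times_F X'\simeq X'.
\]
Since $X'$ is geometrically irreducible, this fiber is an irreducible closed subset of $X\times X'$ of dimension one. It is also reduced, hence integral, because $X'$ is smooth.

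Next, $C$ is an irreducible closed curve, so it has dimension one, and it sits inside the irreducible one-dimensional closed set $\{x_0\}\times X'$. An irreducible closed subset of an irreducible noetherian space of the same finite dimension must equal the whole space. Hence $C=\{x_0\}\times X'$ as closed subsets. If $C$ is regarded with its reduced structure (as a subvariety), the two also agree as subschemes, since $\{x_0\}\times X'$ is reduced.

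The hypothesis that $p_2|_C$ is non-constant serves only to ensure that $C$ is genuinely one-dimensional and not a point. Indeed, if $C$ were zero-dimensional, it would map to a single point under $p_2$, which is excluded. Consistently with this, $p_2$ restricted to $\{x_0\}\times X'$ is the isomorphism onto $X'$, so it is surjective.

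There is no real obstacle in this argument. The only point requiring care is that the fiber over $x_0$ is irreducible of dimension one, and for this we need both the rationality of $x_0$ and the geometric irreducibility of $X'$. If $x_0$ were merely a closed point with residue field $k\supsetneq F$, the fiber would be $X'_k$, which is still irreducible because $X'$ is geometrically irreducible, so the same argument would go through.
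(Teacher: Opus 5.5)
Your proof is correct, and it shares the paper's skeleton. Both arguments trap $C$ inside the fiber $p_1^{-1}(x_0)\simeq X'\times_F k(x_0)$. Both then use that this fiber is an integral curve, which is where smoothness and geometric irreducibility of $X'$ enter.

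The routes diverge in the final step. The paper factors the inclusion $i_C$ through the fiber $C'$ to get $\varphi\colon C\to C'$. It uses the non-constancy of $p_2|_C$ to see that $\varphi$ is non-constant, hence finite via the finite extension of function fields. It then concludes because $\varphi$ is also a closed immersion, and a finite closed immersion between integral curves is an isomorphism. You instead use the purely topological fact that an irreducible closed subset of an irreducible space of the same finite dimension is the whole space.

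Your route is more elementary. It also makes visible that the hypothesis on $p_2|_C$ is redundant once $C$ is assumed to be a curve; in the paper, that hypothesis only supplies the dimension information you take directly from $\dim C=1$. The paper's morphism-level argument yields equality as closed subschemes in one stroke, implicitly taking $C$ integral. You obtain that equality separately from the reducedness of the fiber.

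Your closing remark is slightly self-contradictory. For $F$-rational $x_0$, irreducibility of $X'$ alone suffices, and geometric irreducibility is what handles non-rational closed points. The paper's proof already works in that generality, since it writes the fiber as $X'\times_F k(x_0)$.
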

\begin{proof}
Set $C' := p_1^{-1}(x_0)=\{x_0\} \times S'$ and let $k(x_0)$ denote the residue field at $x_0$.
Then $C'$ is an integral projective curve isomorphic to $X' \times_{\text{Spec}F}\text{Spec}(k(x_0))$. Consider $i_C : C \hookrightarrow S\times X'$.
Since $p_1 \circ i_C$ is constant, $\iota_C$ factors uniquely through $C'$, giving a morphism $\varphi : C \rightarrow C'$.
Because $p_2|_C$ is nonconstant, the morphism $\varphi$ is nonconstant.
Hence it induces an inclusion at the level of  function fields $F(C') \hookrightarrow F(C)$.
Both $F(C')$ and $F(C)$ have transcendence degree $1$ over $F$, so
the field extension $F(C)/F(C')$ is finite. Thus $\varphi$ is
a finite morphism between integral projective curves.
We also have that $i_C$ is a closed immersion, so $\varphi$ is a closed immersion as well.
A finite morphism that is also a closed immersion between integral curves
is an isomorphism, so $C \simeq  \{x_0\}\times X'$
as closed subschemes of $X \times X'$.
\end{proof}

Therefore, if one takes $r=2$ and $X=S_1$, $X'=S_2$ and assumes that $C$ contains at least one CM point (and we actually assume it contains infinitely many of them), then $C$ has to be a special subvariety.

\begin{rmks}
    \begin{itemize} \item[]\end{itemize}
    \noindent{\bf 1.} We point out that, in Yafaev's paper \cite{yaf} (as well as in \cite{edix2}), effective Chebotarev, which indeed requires GRH, is still necessary even for the (simpler) case $B\not\simeq B'$, as it is crucially exploited in \cite[Lemma 4.7]{yaf}. The proof is still simplified by such the non-isomorphic hypothesis, as the automorphism $\sigma$ does not exist, then one of the projections is forced to be constant. Nonetheless it does not seem possible to avoid GRH.
    \\
    \noindent{\bf 2.} As a (very) minor remark, we notice that the above Lemma shows that the first line after the statement of \cite[Theorem 4.1]{yaf}, although correct, is independent of the hypothesis of the cited Theorem.
    
\end{rmks}

\subsection{Adelic quotients and equidistribution}

Given a linear algebraic group $G$ over $\Q$, its adelic quotient is denoted as $[G]=G(\Q)\backslash G(\A)$. If $G$ is anisotropic, then the locally compact space $[G]$ carries a unique $G(\A)$-invariant probability measure, which we dub simply as the Haar measure on $[G]$.

\subsubsection{Toral packets}
For every $i\in\{1,\dots,r\}$, let $\iota_i$ be an embedding of a CM field $E$ into $B_i$, which induces the morphism of algebraic groups
$\iota_i\colon T_E=(\text{Res}_{E/\Q}\G_m)/\G_m\rightarrow G_i$. Let 
\[
\G:=G_1\times\dots\times G_r\ .
\]
Then the $\iota_i$'s induce a diagonal morphism $\iota\colon T_E\rightarrow \G$. We denote the image of the latter morphism by $\T_\iota$. For $g=(g_1,\dots,g_r)\in\G(\A)$, the associated compact {\em toral packet} is given by the right translate of $[\T_\iota]$ by $g$, i.e.,
\[
[\T_\iota g]=\T_\iota(\Q)\backslash \T_\iota(\A)g\subset [\G]\;.
\]
We equip a toral packet with the pushforward of the natural probability Haar measure on $[\T_{\iota}]$. If $K_f\subset \G(\A_f)$ is a compact open subgroup, we write $[\G]_{K_f}:=[\G]/K_f$ and $[\T_{\iota}g]_{K_f}$ for the  finite-level quotient, and the projection of the packet to it.

Let us recall the notion of {\em discriminant} for a toral packet following \cite[Definition 2.2]{kha}, which is equivalent to the one introduced in \cite[Section 6.1]{elmv}. For a rational prime $p$ and $z\in\scal^{\text{CM}}$ with CM by $E$, define the discriminant $D_p$ as the discriminant of the local order $\End(z)\otimes\Z_p$ (as constructed in (\ref{e:order}), where we take $R$ to be maximal in $B$). Note that, for all $p$ such that $E_p$ is ramified, we have $D_p=1$. 
Consider the order in $E$ given by
$$\Lambda=\bigcap_p\End(z)\otimes\Z_p$$
which is a finite index  $\Z$-sublattice of the ring of integers $\calo_E$. Indeed, its discriminant amounts to $\prod_pD_p$. Therefore any homogeneous toral quotient $[T_E(\A)g]$ has an associated lattice $\Lambda\subset\calo_E$.
\\The discriminant of a toral packet $[\T_\iota g]$ is thus defined as
\[
\text{disc}([\T_\iota g]):=\min_{1\le i\le r}\text{disc}([\T_{\iota_i}g_i]),
\]
where $[\T_{\iota_i}g_i]$ denotes the $i$-th projection of the toral packet and $\text{disc}([\T_{\iota_i}g_i])$ is the discriminant of the associated lattice.

\subsubsection{Ergodic equidistribution}

The equidistribution statement needed in this note is a direct specialization of Aka--Luethi--Michel--Wieser \cite[Theorem~1.8]{almw}. We record it in the notation used here.

\begin{thm}\label{t:almw-specialized}
 For each $i$, let $R_i\subset B_i$ be a maximal order and put
\[
K_{f,i}:=\operatorname{im}\big(\widehat{R_i}^{\,\times}\rightarrow G_i(\A_f)\big),
\qquad
K_f:=\prod_{i=1}^rK_{f,i}\subset \G(\A_f).
\]
Let $p$ and $q$ be two distinct odd rational primes. Let $(E_n)_n$ be a sequence of quadratic fields such that $p$ and $q$ split in $E_n$ for every $n$. Let $\iota_n=(\iota_{1,n},\dots,\iota_{r,n})$ and $\iota_{i,n}:E_n\hookrightarrow B_i$,
and let the induced diagonal torus be denoted by $\T_{\iota_n}\subset \G$. For any $n$, let $g_n\in \G(\A)$. If
\[
\disc([\T_{\iota_n}g_n])\rightarrow \infty,
\]
then the packets $[\T_{\iota_n}g_n]_{K_f}$ equidistribute in $[\G]_{K_f}$.
\end{thm}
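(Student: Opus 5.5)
The plan is to deduce the statement directly from \cite[Theorem~1.8]{almw}, checking its hypotheses one by one in our setting. I recall the shape of that theorem: for a $\Q$-group which is a product of $\Q$-forms $G_i$ of $\pgl_2$ (or $\mathrm{SO}_3$), a sequence of diagonally embedded tori $\T_{\iota_n}$ coming from quadratic fields $E_n$, and translates $g_n$, the packets $[\T_{\iota_n}g_n]$ equidistribute to the Haar measure in the finite-level quotient $[\G]_{K_f}$. This holds provided several conditions are met: each factor is anisotropic at the archimedean place appropriately or compact modulo center, the minimal discriminant of the projections tends to infinity, the fields $E_n$ split at two fixed odd primes $p,q$ (the Linnik-type condition feeding the Einsiedler--Lindenstrauss joining theorem \cite{el}), the factors $G_i$ are pairwise non-isomorphic (so that no nontrivial joinings exist), and finally a norm-surjectivity / level condition on $K_f$.

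I will verify these in order. First, the $G_i$ are pairwise non-isomorphic since $B_i\not\simeq B_j$, and non-isomorphic $\Q$-forms of $\pgl_2$ are not $\Q$-isogenous (as already used in the Lemma on special subvarieties). Hence the only joining alternatives in \cite{el} (measures supported on graphs of isogenies between factors) are excluded, and the limit measure, being a joining of the Haar measures on each factor by the single-factor Duke-type theorem, must be the product Haar measure. Second, the splitting of $p,q$ in $E_n$ gives that $\T_{\iota_n}(\Q_p)$ and $\T_{\iota_n}(\Q_q)$ are split tori. This is exactly the input needed for positive entropy and the invariance argument at these places; note $B_i$ may ramify at $p$ or $q$, but a split $E_n$ cannot embed in a ramified local algebra, so automatically $p,q\nmid d_i$. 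Third, the discriminant hypothesis is verbatim our assumption $\disc\to\infty$, with $\disc$ defined as the minimum over factors, matching \cite[Definition~2.2]{kha}.

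The step I expect to be the main obstacle is the norm-surjectivity/connectedness condition. Equidistribution in the full adelic quotient may fail because of the finite abelian quotient $\G(\A)\to\prod_i\A^\times/(\Q^\times\A^{\times2})$ given by reduced norms, so characters of this group must be controlled. At maximal level $K_{f,i}=\widehat{R_i}^\times$ the reduced norm $\mathrm{nr}\colon\widehat{R_i}^\times\to\widehat{\Z}^\times$ is surjective (locally at every prime, including ramified ones, maximal orders have surjective norm onto $\Z_\ell^\times$). Combining this with strong approximation and $\Q^\times_{>0}\widehat{\Z}^\times=\A_f^\times$, the image of $K_f$ kills the relevant character group. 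So the character-spectrum analysis of \cite{almw} reduces to the trivial character, and equidistribution holds in $[\G]_{K_f}$. I would spell this out by showing that $[\G]_{K_f}$ has a single "norm component" for each factor. If that proves too quick, I would instead invoke the precise statement in \cite{almw} that the conclusion holds for any $K_f$ whose norm image is all of $\widehat{\Z}^\times$. Assembling the three verifications yields the theorem.
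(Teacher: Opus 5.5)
Your route is the same as the paper's. Both deduce the statement directly from \cite[Theorem~1.8]{almw}, and the substantive check is the same in both: at maximal level $\mathrm{nr}(\widehat{R_i}^{\times})=\widehat{\Z}^{\times}$, so the reduced norm from $K_{f,i}$ surjects onto $\widehat{\Z}^{\times}/(\widehat{\Z}^{\times})^2$. Your remarks on pairwise non-isomorphism, on the splitting at $p,q$, and on the discriminant only restate hypotheses that the paper takes over directly. Your observation that the splitting forces $p,q\nmid d_i$ is correct, since a split $E_n\otimes\Q_p$ cannot embed in a division algebra.

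The one item you leave open is archimedean. You list as a hypothesis of \cite[Theorem~1.8]{almw} that each factor be ``anisotropic at the archimedean place'', and you never return to it. Here the $B_i$ give Shimura curves, so they are indefinite and $G_i(\R)\simeq\pgl_2(\R)$ is non-compact; such a hypothesis would simply fail. The fact you need is that \cite[Theorem~1.8]{almw} is stated for arbitrary pairwise distinct quaternion algebras over $\Q$, with no definiteness assumption. This is the first of the two points the paper's proof records. With that added, your deduction is complete.

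A small slip: $\A^{\times}/\Q^{\times}\A^{\times2}$ is compact but not finite. This does not affect the argument, since at level $K_f$ only the trivial character survives.
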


\begin{proof}
This is \cite[Theorem~1.8]{almw}, specialized to the compact open subgroup $K_f$ above. We recall the two points relevant to the present application. First, \cite[Theorem~1.8]{almw} is stated for arbitrary distinct rational quaternion algebras\footnote{In particular, no definiteness assumption is imposed.}. Secondly, the finite-level hypothesis in \cite[Theorem~1.8]{almw} is the surjectivity of the reduced norm
\[
\text{Nr}_{B_i}\colon K_{f,i}\twoheadrightarrow
\widehat{\Z}^{\times}/(\widehat{\Z}^{\times})^2
\]
for every $i$. Since we work at maximal level, $K_{f,i}$ is the image of $\widehat{R_i}^{\,\times}$ with $R_i$ maximal. In particular $R_i$ is an Eichler order, and \cite[Remark~1.10 and Lemma~9.5]{almw} give
\[
\text{Nr}_{B_i}(\widehat{R_i}^{\,\times})=\widehat{\Z}^{\times}.
\]
After passage to $PB_i^\times$, the reduced norm is defined modulo squares; hence the required norm surjectivity condition holds for $K_{f,i}$. 
\end{proof}



\subsubsection{Proof of Theorem~\ref{t:1}}

Let
\[
   z_n=(z_{1,n},\ldots,z_{r,n})
   \in \scal_1\times\cdots\times\scal_r
\]
be as in the statement of the theorem. Choose embeddings $\iota_{i,n}:E_n\hookrightarrow B_i$ and adelic representatives $g_{i,n}\in G_i(\A)$ for the corresponding CM
points. Set $\iota_n=(\iota_{1,n},\ldots,\iota_{r,n})$, and $g_n=(g_{1,n},\ldots,g_{r,n})\in \G(\A)$, and let $\T_{\iota_n}\subset \G$ be the diagonally embedded torus associated
with $E_n$.

Let $K_\infty\subset \G(\R)$ be the product of the archimedean maximal compact subgroups, so that
\[
\scal_1(\C)\times\cdots\times\scal_r(\C)\simeq\G(\Q)\backslash \G(\A)/(K_\infty K_f)\ .
\]
Denote by $\pi_\infty\colon [\G]_{K_f}\rightarrow \G(\Q)\backslash \G(\A)/(K_\infty K_f)$ the natural projection.
By Shimura reciprocity, the Galois orbit of $z_n$ is the image, under
$\pi_\infty$, of the finite-level toral packet
\[
   [\T_{\iota_n}g_n]_{K_f}\subset [\G]_{K_f}.
\]
Equivalently, the normalized counting measure on
$\text{Gal}_{E_n}.z_n$ is the pushforward under
$\pi_\infty$ of the Haar probability measure on this packet.

It remains only to check that the hypotheses of
Theorem~\ref{t:almw-specialized} apply to these packets. The primes $p$ and
$q$ split in $E_n$ by assumption. For the discriminant condition, set
$\calo_{i,n}:=\End(z_{i,n})\subset E_n$.
At the fixed maximal finite level, the packet-discriminant comparison gives a constant $C_i>1$, depending only on $B_i$ and on the level, such that $C_i^{-1}|\disc(\calo_{i,n})|\le\disc([\T_{\iota_{i,n}}g_{i,n}])\le C_i|\disc(\calo_{i,n})|$.
We use this comparison in the form recorded in \cite[Section~7.2.3]{almw}. If
$c_{i,n}$ denotes the conductor of $\calo_{i,n}$, then $|\disc(\calo_{i,n})|=|\disc(E_n)|\,c_{i,n}^{2}$.
Thus every projected packet discriminant tends to infinity as
$|\disc(E_n)|\to\infty$. Hence
\[
   \disc([\T_{\iota_n}g_n])
   =
   \min_i \disc([\T_{\iota_{i,n}}g_{i,n}])
   \to \infty .
\]

By Theorem~\ref{t:almw-specialized}, the probability measures on $[\T_{\iota_n}g_n]_{K_f}$ weak-$*$ converge to Haar measure on $[\G]_{K_f}$. The projection $\pi_\infty$ is continuous and proper, since it is obtained
by quotienting on the right by the compact group $K_\infty$. Hence, for every compactly supported continuous test function on $\G(\Q)\backslash \G(\A)/(K_\infty K_f)$, its pullback to $[\G]_{K_f}$ is again compactly supported and continuous. Therefore the weak-$*$ convergence given by Theorem~\ref{t:almw-specialized} descends after pushing forward by $\pi_\infty$. Via the Shimura reciprocity as above, the pushed-forward packet measures are precisely the normalized counting measures on $\Gal_{E_n}.z_n$, while the pushforward of Haar measure is the product Haar probability measure on $\scal_1(\C)\times\cdots\times\scal_r(\C)$. Hence these Galois orbit measures weak-$*$ converge to the product Haar measure, proving Theorem~\ref{t:1}.

\begin{rmk}
    Using the language of adelic double quotients and the equidistribution theorem above, it seems possible to prove the equidistribution of the reduction of Galois orbits of special points to the special fiber $(\cals_1\times\dots\times\cals_r)\times\Spec(\overline{\F}_p)$. Naturally, the behaviour of $B$ at $p$ influences deeply the geometry of the special fiber at $p$. For individual equidistribution results covering both the case where $B_p$ is unramified and the case where $B_p$ is ramified, we refer to \cite{fms2} and its citation orbit.
\end{rmk}

\end{document}